\newcommand{\mysection}[1]{\section{#1}
      \setcounter{equation}{0}}
\newtheorem{theorem}{Theorem}[section]
\newtheorem{lemma}[theorem]{Lemma}
\theoremstyle{definition}
\newtheorem{assumption}{Assumption}[section]
\newtheorem{definition}{Definition}[section]
\theoremstyle{remark}
\newtheorem{remark}{Remark}[section]
\newcommand\bR{\mathbb{R}}
\newcommand\bB{\mathbb{B}}
\newcommand{\B}{\mathscr{B}}
\newcommand{\<}{\langle}
\renewcommand{\>}{\rangle}
\begin{document}

\title{A Comparison Principle 
for Stochastic Integro-Differential Equations }


\author{Konstantinos Dareiotis       \and
       Istv\'an Gy\"ongy  
}


\maketitle

\begin{abstract}
A comparison principle for stochastic 
integro-differential equations 
driven by L\'evy processes is proved. 
This result is obtained via an extension of an It\^o formula, proved by N.V. Krylov, for 
the square of the  norm of the positive part of $L_2-$valued,  continuous semimartingales,  to the case of discontinuous semimartingales.
\keywords{Comparison principle \and It\^o's formula \and SPDE \and L\'evy processes}
\end{abstract}

\section{Introduction}
\label{intro}
Our goal is to prove a comparison principle 
for  stochastic integro-differential equations (SIDEs) 
driven by L\'evy processes. 
For this, first we present an It\^o's formula for the square of the $L_2$-norm 
of the positive part of (possibly) discontinuous semimartingales with values in  
$L_2$-spaces. Our formula extends an It\^o formula from \cite{K} proved  
for continuous semimartingales. In \cite{K} It\^o's formulas for the square 
of $L_2$-norm of certain convex functions $r(u)$ of continuous semimartingales 
$u=u_t$ with values in $L_2$-spaces are obtained, and the important special case, 
$r(u)=(u)^+=\max(u,0)$, is then applied to prove a maximum principle for linear stochastic partial differential equations (SPDEs). The present paper is organized as follows. In Section 2 we formulate and prove our It\^o formula. The main results concerning comparison theorems are stated in Section 3.  We also give an existence and uniqueness result as a simple consequence of a theorem on stochastic evolution equations from \cite{G3}. For recent results concerning the solvability of SPDEs driven by L\'evy processes we refer to \cite{Kim}. In Section 4 we give some tools that will be needed in order to prove the main theorems in Section 5. For notions and results in SPDEs we refer to \cite{Ro}.

Comparison principles are  powerful tools and 
play important role in PDE theory. Comparison 
theorems for SPDEs are known in  various generalities in the literature. 
To the best of our knowledge, the first results on comparison of solutions of SPDEs appear in 
 \cite{KO} and \cite{DP}. Recent results appear in  \cite{K}, \cite{M}, \cite{DMS} and \cite{DMZ}. In \cite{DMS} and \cite{M} quasi linear SPDEs, and in \cite{DMZ}  
quasi-linear SPDEs with obstacle are considered, and the $p$-th moments of the positive part of the supremum norm of the solutions are also estimated. 
In the above publications, SPDEs driven by 
Wiener processes, or cylindrical Wiener processes are considered. Our main result, 
Theorems \ref{Maintheorem} and \ref{Maintheorem2}, are comparison theorems for two classes of quasilinear SIDEs, linear versions of which, arise in non-linear filtering. For the theory of non-linear filtering of processes with jumps we refer to \cite{Gr1} and \cite{Gr2}.
We will apply our result to investigate the solvability of a class of SPDEs driven by L\'evy 
processes in another paper.

In conclusion we introduce some basic notation of the paper.  Let  $(\Omega, \mathscr{F}, P)$ be 
a  probability space  
equipped with a  
right-continuous filtration $(\mathscr{F}_t)_{t\geq0}$, 
such that $\mathscr{F}_0$ contains all 
$P$-zero sets.  
We consider  a $\sigma$-finite measure space,  
$(Z, \mathcal{Z}, \nu)$ and a  quasi left-continuous, adapted  point 
process $(p_t)_{t\in[0,T]}$ in $Z$, for a finite $T>0$.
 Let $N(dt,dz)$  be the 
random measure on $[0,T] \times Z$, corresponding to the point processes $p$.
We assume that its compensator
is  $dt \nu(dz)$ and we use the notation 
$$
\tilde{N}(dt,dz) = N(dt,dz) -dt\nu(dz).
$$
We also consider a sequence of 
independent 
real valued  $\mathscr{F}_t$-Wiener processes $\{w^k_t\}_{k=1}^{\infty}$. 

 If $X$ is a 
topological space then $\B (X)$ is  the 
Borel $\sigma$-algebra on $X$. The notation  $\mathscr{P}$ 
is used for the predictable $\sigma$-algebra on $\Omega \times [0,T]$. 
If $X$ is a normed linear space then $|x |_X $ denotes the norm of 
$x\in X$, $X^*$ is the dual of $X$, and  
$\langle x, x^* \rangle$ denotes the action of 
$x^* \in X^*$ on $x\in X$.  The notation $Q$  stands for the whole space $\bR^d$ or for a bounded Lipschitz domain in $\bR^d$. We write
$$
D_iu:=\frac{\partial u}{\partial x_i}, \ D_{ij}u:= \frac{\partial^2 u}{\partial x_i \partial x_j}, \ \text{for  $i,j=1,...,d$,}
$$
for the first and second order partial derivatives of a function $u$ defined on $Q$.
 As usual we denote by $W^k_p(Q)$ the space of functions 
$u \in L_p(Q)$, whose generalized derivatives up to order $k$
lie in $L_p(Q)$. We set $H^1(Q):=W^1_2(Q)$ and we  write 
$H^1_0(Q)$ for the closure of $C^\infty_c(Q)$ in $H^1(Q)$ under the norm
$$ 
|u|_{H^1} = \Big( \sum_{i=1}^d |D_iu|_{L_2}+|u|_{L_2} \Big)^{1/2}.
$$
We will use the notation $H^{-1}(Q)$ for the dual of $H^1_0(Q)$.
Finally, we note that unless otherwise indicated, the summation convention is used with respect to repeated integer-valued indices throughout the paper.

\mysection{It\^o's formula for the square of the norm of the positive part}   \label{section 1}

We are interested in  an It\^o's  formula for $|u_t^+|^2_{L_2(Q)}$,
where $u_t$ is an $H^{-1}(Q)$-valued semimartingale 
taking values in $H^1_0(Q)$ for 
$dP\times dt$ almost every $(\omega,t)\in\Omega\times[0,T]$.  Our approach to obtain it is similar to that in \cite{K}.
To state the  formula we set 
 $$
 V:= H^1_0(Q),\quad H:= L_2(Q),\quad V^*:= H^{-1}(Q), 
 $$
 and  we consider  the following processes
 $$
 v:\Omega \times [0,T] \to V, \ v^*: \Omega \times [0,T] \to V^*, \ h^k : 
 \Omega \times [0,T] \to H ,
 $$
 $$
 K : \Omega \times [0,T] \times Z \to H,
 $$
 for integers $k\geq1$, 
 where  $v$ is progressively measurable,
  $v^*$ and  
 $h^k$ are  $\mathscr{F}_t$-adapted, measurable in $(\omega,t)$, and 
 $K$ is $\mathscr{P} \times \mathcal{Z} $ measurable. 
 We consider also $\psi$,
  an $\mathscr{F}_0$-measurable random variable in $H$. 
  
  It is easy to see that $V=H^1_0(Q)$ is continuously and densely 
  embedded into $L_2(Q)$. Hence, by identifying $H=L_2(Q)$ with its dual 
  $H^{\ast}$ by the help of the inner product $(,)$ in $L_2(Q)$, 
   we get the {\it normal triple of spaces} 
  $$
V\hookrightarrow H\equiv H^{\ast}\hookrightarrow V^{\ast},
  $$
  where $H^{\ast}\hookrightarrow V^{\ast}$
is the adjoint embedding of $V\hookrightarrow H$. We use the notation 
$\langle v^{\ast},v\rangle$ for the duality product of $v^{\ast}\in V^{\ast}$ and 
$v\in V$. Notice that $\langle v^{\ast},v\rangle=(v^{\ast},v)$ when $v^{\ast}\in H$.

 A stochastic process $v=(v_t)_{t\in[0,T]}$, taking values in a Banach space 
$\bB$, is called a $\bB$-valued strongly c\'adl\'ag  process if with probability one 
the trajectories of $v$ are continuous from the right in $t\in[0,T)$ and have limits from 
the left at every $t\in(0,T]$ in the strong topology of $\bB$, i.e., in the topology given by the norm 
in $\bB$. 

 We make the following assumption.
 \begin{assumption} $ $                                                    \label{Assumption Ito's}
\begin{enumerate}
\item[(i)] Almost surely   
 \[ 
 \int_{(0,T]} \Big( |v_t|^2_V +     |v^*_t|_{V^*}^2+\sum_k |h^k_t|_H^2   
 +\int_Z  |K_t(z)|^2_H \nu (dz)\Big) dt < \infty,
 \]
\item[(ii)] for each 
$\phi \in V$ and for $dP \times dt$-almost every $(\omega,t)$,   we have
$$
(v_t,\phi) =(\psi,\phi)+ \int_{(0,t]} \langle v^*_s, \phi \rangle  ds + \int_{(0,t]} (h^k_s, \phi)dw^k_s 
$$
$$
+ \int_{(0,t]} \int_Z (K_s(z), \phi ) \tilde{N} (ds,dz).
$$

\end{enumerate} 
 \end{assumption}

 \begin{theorem}                                                                              \label{Ito's formula}
Suppose that Assumption \ref{Assumption Ito's} is satisfied. 
 Then there exists a set $\tilde{\Omega } \subset \Omega$ 
 of probability one, and an $H$-valued strongly c\'adl\'ag 
 adapted process $u_t$ 
 such that $u_t=v_t$ for $dP \times dt$-almost every 
 $(\omega,t)$. Moreover for $\omega \in \tilde{\Omega}$, 
 $t \in [0,T]$ we  have
 \begin{align}                                                                      \label{equalityforallt} 
 i)& \  u_t =\psi+ \int_{(0,t]} v^*_s  ds + \int_{(0,t]} h^k_sdw^k_s 
   + \int_{(0,t]} \int_Z K_s(z) \tilde{N} (ds,dz), \\                                                                                             
\nonumber  ii)& \, |u^+_t|^2_H=|\psi^+|^2_H 
+ 2\int_{(0,t]} \langle v^*_s , u^+_s \rangle ds +2\int_{(0,t]} (h^k_s,u^+_s)dw^k_s
\end{align}
 
 $$
 +2\int_{(0,t]} \int_Z (K_s(z), u^+_{s-}) \tilde{N} (dz,ds)
 + \int_{(0,t]}\sum_k  | I_{ u_s >0} h^k_s|^2_H ds
 $$
\[+ \int_{(0,t]} \int_Z |(u_{s-}+K_s(z))^+|^2_H-|u^+_{s-}|^2_H- 2(K_s(z), u^+_{s-})_H\  N(dz,ds).\]
\end{theorem}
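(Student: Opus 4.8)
The plan is to prove the two assertions in turn. For (i), I would take the right-hand side of (i) as the definition of a candidate process. Under Assumption \ref{Assumption Ito's}(i) the stochastic integrals $\int_{(0,t]} h^k_s\,dw^k_s$ and $\int_{(0,t]}\int_Z K_s(z)\,\tilde N(ds,dz)$ are $H$-valued c\`adl\`ag martingales (the Wiener integral continuous, the $\tilde N$-integral purely discontinuous), while $\int_{(0,t]} v^*_s\,ds$ is an absolutely continuous $V^*$-valued process. The nontrivial point is that the sum admits a strongly c\`adl\`ag $H$-valued modification $u_t$ agreeing with $v$ for $dP\times dt$-a.e.\ $(\omega,t)$. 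This is the variational regularity result for equations in the Gelfand triple $V\hookrightarrow H\hookrightarrow V^*$, extended to jump noise: I would obtain it by smoothing in time, using that $v\in V$ a.e.\ together with the $V^*$-valued drift to control the evolution of $|u|_H^2$, the jump contribution being handled by the $L_2(\nu\,ds)$ bound on $K$.

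For (ii), the key observation is that $\phi(r):=(r^+)^2$ is $C^1$ with Lipschitz derivative $\phi'(r)=2r^+$ and bounded second derivative $\phi''=2I_{r>0}$, so the formula is the infinite-dimensional analogue of the classical It\^o formula for semimartingales with jumps applied to the functional $F(w)=\int_Q\phi(w)\,dx=|w^+|_H^2$. That $\phi\in C^1$ (rather than the positive part itself) is crucial: no local-time term survives. Following the approach of \cite{K}, I would prove it by a double approximation. First replace $\phi$ by a $C^2$ convex $\phi_\delta$ with $\phi_\delta''$ uniformly bounded and $\phi_\delta\to\phi$, $\phi_\delta'\to 2r^+$, $\phi_\delta''\to 2I_{r>0}$ pointwise. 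Second, regularize $u$ in the space variable so that $u_t^{(\varepsilon)}(x)$ is, for fixed $x$, a real-valued semimartingale to which the one-dimensional It\^o formula for jump processes applies pointwise; integrating over $Q$ yields the formula for $F_\delta(u^{(\varepsilon)})$, with the jump remainder appearing naturally against $N(ds,dz)$ and the compensated linear term against $\tilde N(ds,dz)$.

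I would then pass to the limits. Letting $\varepsilon\to 0$ uses that $u\in V$ a.e.\ guarantees $\phi_\delta'(u)\in V$, so that $\langle v^*,\phi_\delta'(u)\rangle$ is well defined, together with the isometry of the stochastic integrals and Assumption \ref{Assumption Ito's}(i). Letting $\delta\to 0$ is by dominated convergence: $\phi_\delta'(u_s)\to 2u_s^+$ in $V$ for a.e.\ $s$, $\phi_\delta''(u_s)\sum_k(h^k_s)^2\to 2I_{u_s>0}\sum_k(h^k_s)^2$, and the jump integrand obeys the pointwise bound $|\phi_\delta(a+b)-\phi_\delta(a)-\phi_\delta'(a)b|\le b^2$ coming from $\phi_\delta''\le 2$. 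Integrated over $Q$ this is dominated by $|K_s(z)|_H^2$, whose compensator $\int_Z|K_s(z)|_H^2\,\nu(dz)\,ds$ is finite a.s.\ by Assumption \ref{Assumption Ito's}(i); this legitimizes the convergence of the $N$-integral and, after subtracting its compensator, of the compensated $\tilde N$-martingale.

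The main obstacle I anticipate is the interchange of limits in the two non-smooth terms on the set $\{u=0\}$: showing that $\phi_\delta''(u_s)|h^k_s|^2$ produces exactly $I_{u_s>0}|h^k_s|_H^2$ in the limit, with no contribution from $\{u_s=0\}$, and simultaneously that $\int_Z\big[|(u_{s-}+K_s(z))^+|^2-|u_{s-}^+|^2-2(K_s(z),u_{s-}^+)\big]\,N(ds,dz)$ converges with the correct indicator structure. Because $\phi\in C^1$ the first-order term is continuous and no local time appears, so the delicate estimate reduces to the dominated passage to the limit in the second-order and jump remainders, controlled uniformly in $\delta$ by $\phi_\delta''\le 2$ together with the square-integrability hypotheses of Assumption \ref{Assumption Ito's}.
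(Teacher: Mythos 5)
Your strategy for part (ii) is essentially the paper's: a double regularization by spatial mollification and by a $C^2$ convex approximation of $(r^+)^2$ (the paper's $\gamma_\delta$, with $\gamma_\delta''=\alpha_\delta\le 1$ and the Taylor bound \eqref{Taylor estimate} playing the role of your $|\phi_\delta(a+b)-\phi_\delta(a)-\phi_\delta'(a)b|\le b^2$), the one-dimensional It\^o formula with jumps applied pointwise in $x$, Fubini and stochastic Fubini to integrate over the space variable, and the limits taken in the same order, $\varepsilon\to0$ first and then $\delta\to0$, with dominated convergence justified through a subsequence/majorant argument (Lemma \ref{majorant} in the paper). Your worry about the set $\{u=0\}$ resolves exactly as you suspect: $\alpha_\delta(0)=0=I_{0>0}$, so the second-order term converges by plain dominated convergence and no extra contribution survives. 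For part (i) the paper gives no proof at all but cites \cite{G2}; your sketch of the variational regularity argument is consistent with, but not a substitute for, that reference.

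The one concrete step your proposal misses is the case of a bounded Lipschitz domain $Q$. Your spatial regularization requires plugging $\phi_\varepsilon(x-\cdot)$ into the weak formulation as a test function, which is legitimate only when $\phi_\varepsilon(x-\cdot)\in V=H^1_0(Q)$; for $x$ within distance $O(\varepsilon)$ of $\partial Q$ it is not, the mollified process near the boundary satisfies no usable equation, and so ``integrating over $Q$'' is not available. The paper handles this by first proving the formula on $\mathbb{R}^d$ and then reducing the bounded-domain case to it: one multiplies $u$ by interior cutoffs $\phi_n$ (Lemma \ref{trancating sequence}), checks that $\phi_n u$, extended by zero, solves an equation on all of $\mathbb{R}^d$ with drift defined by $\langle \phi_n v^*_s,g\rangle:=\langle v^*_s,\phi_n g\rangle_{Q}$, applies the whole-space result to $\phi_n u$, and passes to the limit $n\to\infty$ using the uniform bound $\sup_n|\phi_n^2 v|_{H^1_0}\le C|v|_{H^1_0}$ of Remark \ref{remarktrancating}. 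Without some such localization away from $\partial Q$, your argument proves the theorem only for $Q=\mathbb{R}^d$.
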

To prove  Theorem \ref{Ito's formula}  we need two lemmas. 
\begin{lemma}                                                                                   \label{majorant}
Let $(X,\Sigma, \mu)$ be a measure space, and 
let $u_n ,u \in L_1(X)$such that  $u_n \to u$ in $L_1(X)$ as 
$n\to\infty$.
Then there exists a subsequence $\{u_{n(k)}\}_{k=1}^{\infty}$ and a function 
$v \in L_1(X)$ such that for all $k\geq1$ we have $|u_{n(k)}(x)|\leq v(x)$ for all 
$x\in X$, and $u_{n(k)}(x)\to u(x)$ for $\mu$-almost every $x$ as $k\to\infty$.
\end{lemma}
\begin{proof}
 There exists a subsequence $u_{n(k)}$ such that 
\[
|u_{n(k)}-u|_{L_1(X)} \leq 1/2^k\quad \text{for $k\geq1$}.
\]
Set $v(x)=|u(x)|+\sum_k |u_{n(k)}(x)-u(x)|$. Then $v$ has the desired properties. 
Moreover, $\sum_k |u_{n(k+1)}-u_{n(k)}|_{L_1(X)}<\infty$, which implies 
that $u_{n(k)}$ converges $\mu$-almost everywhere.
 \end{proof}
 The next lemma is from \cite{M}. 
 \begin{lemma}                                                                                    \label{trancating sequence}
 
 Let $\mathcal{Q}$ be a bounded Lipschitz domain in $\mathbb{R}^d$. Take $\phi_n \in C^\infty_c( \mathcal{Q})$, $n\in \mathbb{N}$, with 
\begin{enumerate}
\item[i)] $0 \leq \phi_n \leq 1  $

\item[ii)] $\phi_n=1$ on $\{ x\in \mathcal{Q}, r(x) \geq 1/n\}$ 

\item[iii)] $\phi_n=0$ on $ \{ x\in \mathcal{Q}, r(x) \leq 1/2n\}$

\item[iv)]  $|(\phi_n)_{x_i} | \leq Cn$, 
\end{enumerate}
where $C$ is a constant and $r(x)= dist(x, \partial \mathcal{Q})$. 
Then $\phi_n v \to v$ in $H^1_0( \mathcal{Q})$ 
for all $v \in H^1_0( \mathcal{Q})$, 
and for some constant $C$ we have 
\[\sup_n |\phi_n v |_{H^1_0} \leq C |v|_{H^1_0}, 
\qquad \forall v \in H^1_0( \mathcal{Q}).\]
\end{lemma}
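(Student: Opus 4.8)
The plan is to differentiate the product and reduce everything to the behaviour of $v$ near $\partial\mathcal{Q}$, where the only genuinely delicate term—the one carrying the factor $(\phi_n)_{x_i}$—will be controlled by Hardy's inequality. Since $\phi_n\in C^\infty_c(\mathcal{Q})$ and $v\in H^1_0(\mathcal{Q})$, the product $\phi_n v$ again lies in $H^1_0(\mathcal{Q})$, and the Leibniz rule gives
$$D_i(\phi_n v)=(\phi_n)_{x_i}\,v+\phi_n\,D_i v.$$
Hence $\phi_n v-v=(\phi_n-1)v$ and $D_i(\phi_n v)-D_i v=(\phi_n)_{x_i}v+(\phi_n-1)D_iv$, so it suffices to show that each of the three quantities $(\phi_n-1)v$, $(\phi_n-1)D_i v$ and $(\phi_n)_{x_i}v$ tends to $0$ in $L_2(\mathcal{Q})$, together with the corresponding uniform bounds.

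For the first two quantities I would use dominated convergence. By (i) we have $0\le\phi_n\le1$, and by (ii), for every $x$ with $r(x)>0$ one has $\phi_n(x)=1$ as soon as $1/n\le r(x)$; thus $\phi_n\to1$ pointwise a.e.\ in $\mathcal{Q}$. Consequently $|(\phi_n-1)v|\le|v|$ and $|(\phi_n-1)D_iv|\le|D_iv|$ pointwise, the dominating functions $|v|,|D_iv|$ belong to $L_2(\mathcal{Q})$, and both products converge to $0$ pointwise a.e.; dominated convergence yields their convergence to $0$ in $L_2$, while the same bounds give $|(\phi_n-1)v|_{L_2}\le|v|_{L_2}$ and $|(\phi_n-1)D_iv|_{L_2}\le|D_iv|_{L_2}$ for all $n$.

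The crucial term is $(\phi_n)_{x_i}v$. By (ii) and (iii) the derivative $(\phi_n)_{x_i}$ vanishes outside the strip $S_n:=\{x\in\mathcal{Q}:1/2n< r(x)<1/n\}$, and on $S_n$ one has $r(x)<1/n$, so $n<1/r(x)$; combined with (iv) this gives $|(\phi_n)_{x_i}(x)|\le Cn\le C/r(x)$ there. Therefore
$$\int_{\mathcal{Q}}|(\phi_n)_{x_i}|^2|v|^2\,dx=\int_{S_n}|(\phi_n)_{x_i}|^2|v|^2\,dx\le C\int_{S_n}\frac{|v(x)|^2}{r(x)^2}\,dx.$$
At this point I would invoke Hardy's inequality for the bounded Lipschitz domain $\mathcal{Q}$, namely $\int_{\mathcal{Q}}|v|^2/r^2\,dx\le C\sum_i|D_iv|^2_{L_2}\le C|v|^2_{H^1_0}$ for all $v\in H^1_0(\mathcal{Q})$. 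This simultaneously shows $|v|^2/r^2\in L_1(\mathcal{Q})$ and gives the uniform bound $|(\phi_n)_{x_i}v|^2_{L_2}\le C|v|^2_{H^1_0}$. Moreover, since $r(x)>0$ for every $x\in\mathcal{Q}$, we have $\mathbf 1_{S_n}(x)\to0$ pointwise in $\mathcal{Q}$, and with the integrable majorant $|v|^2/r^2$ (independent of $n$) dominated convergence gives $\int_{S_n}|v|^2/r^2\,dx\to0$, whence $(\phi_n)_{x_i}v\to0$ in $L_2$. Assembling the three estimates proves both $\phi_n v\to v$ in $H^1_0(\mathcal{Q})$ and $\sup_n|\phi_n v|_{H^1_0}\le C|v|_{H^1_0}$.

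The main obstacle is exactly the term $(\phi_n)_{x_i}v$: the gradient of the cutoff blows up like $n\sim 1/r$ near the boundary, so any naive estimate fails, and the whole argument rests on Hardy's inequality to convert this singular weight into a bound by the $H^1_0$-norm. The points needing care are to ensure that Hardy's inequality is available in the stated form for a bounded Lipschitz domain (where it is classical) and that the weight $|v|^2/r^2$ is genuinely integrable, so that dominated convergence applies on the shrinking strips $S_n$.
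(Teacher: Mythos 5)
Your proof is correct. Note that the paper itself offers no proof of this lemma --- it is quoted from \cite{M} --- so there is nothing internal to compare against; but your argument (Leibniz rule, dominated convergence for the terms $(\phi_n-1)v$ and $(\phi_n-1)D_iv$, and Hardy's inequality $\int_{\mathcal{Q}}|v|^2/r^2\,dx\leq C|v|^2_{H^1_0}$ to absorb the singular weight $|(\phi_n)_{x_i}|\leq Cn\leq C/r$ on the transition strip) is exactly the standard route, and it is also in essence how the cited reference proceeds: the key estimate there is $\int_{\{r\leq 1/n\}}|v|^2\,dx\leq Cn^{-2}|v|^2_{H^1_0}$, which is just Hardy's inequality restricted to the boundary layer. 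You correctly identify that the uniform bound cannot be obtained by a naive density argument (for $v\in C^\infty_c$ the constant in $|(\phi_n)_{x_i}v|_{L_2}\leq Cn|v|_{L_2}$ degenerates with the support of $v$), so some form of the boundary Hardy inequality is genuinely unavoidable; and that inequality does hold for $H^1_0$ of a bounded Lipschitz domain (Ne\v{c}as), so your appeal to it is legitimate. Two small points of hygiene: work with the closed strip $\{1/2n\leq r\leq 1/n\}$, on which $(\phi_n)_{x_i}$ is still supported by (ii)--(iii) and where $r\leq 1/n$ still gives $n\leq 1/r$; and the pointwise convergence $\mathbf{1}_{S_n}(x)\to 0$ uses only $r(x)>0$ for $x\in\mathcal{Q}$, so no statement about level sets of $r$ is actually needed. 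Neither affects the validity of the argument.
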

\begin{remark}                                                                      \label{remarktrancating}
One can easily see the existence of a sequence 
$(\phi_n)_{n\in \mathbb{N}}$ satisfying 
the conditions of the previous lemma. 
Then note  that $\phi_n^2$ also satisfies i)-iv). 
Hence,  $\phi_n^2 v \to v$ in $H^1_0( \mathcal{Q})$, 
for all $v \in H^1_0( \mathcal{Q})$, 
and for some constant $C$ we have 
\[\sup_n |\phi_n^2 v |_{H^1_0} \leq C |v|_{H^1_0}, 
\qquad \forall v \in H^1_0( \mathcal{Q}).\]
\end{remark}
We introduce now the functions $\alpha_\delta(r)$, 
$\beta_\delta(r)$ and $\gamma_\delta(r)$ on 
$\mathbb{R}$, for  $\delta>0$,  given by
\begin{equation}
 a_ \delta (r)= \left\{
\begin{array}{rl}
1 & \text{if } r > \delta \\
\frac{r}{\delta} & \text{if } 0 \leq r \leq \delta \\
0 & \text{if } r < 0,
\end{array} \right. \nonumber
\end{equation}
\[ \beta _\delta (r) = \int_0^r a_\delta (s)ds,
\qquad \gamma_\delta(r)=\int_0^r \beta_\delta(s) ds.\]
For all $r \in \mathbb{R}$ we have $\alpha_\delta(r) \to I_{r >0}$, 
$ \beta_\delta(r) \to r^+$ 
and $\gamma_\delta(r) \to {(r^+)^2}/{2}$ as $\delta \to 0$. 
Also, for all $r,r_1,r_2$ and $\delta$,  
the following inequalities hold
\[|\alpha_\delta(r)|\leq 1, \ |\beta_\delta(r)| 
\leq |r|, \ |\gamma_\delta(r)| \leq \frac{r^2}{2},\]
\begin{equation}                        \label{Taylor estimate}
|\gamma_\delta(r_1+r_2)- \gamma_\delta(r_1)-\beta_\delta(r_1) r_2| \leq |r_2|^2.
\end{equation}
We are now ready to prove Theorem \ref{Ito's formula}.

\begin{proof}[Proof of Theorem \ref{Ito's formula}] 
We only prove ii) since the rest of the assertions are proved in \cite{G2}, in greater generality. 
First we prove the statement when $Q= \mathbb{R}^d$.
 We have that equality \eqref{equalityforallt} is satisfied if and only if,   almost surely, 
 for all $\varphi\in V$ and $t$ we have
  \[
  (u_t, \varphi)=(\psi,\varphi) 
  + \int_{(0,t]}\langle v^*_s, \varphi \rangle ds + \int_{(0,t]} ( h^k_s,\varphi) dw^k_s
  \]
 \begin{equation}                                                                               \label{equalityonV^*}
 +   \int_{(0,t]} \int_Z (K_s(z),\varphi) \tilde{N} (ds,dz).
 \end{equation}
Let $\phi$ be a mollifier with compact support and set $\phi_\epsilon(x):= \epsilon^{-d} \phi(x/ \epsilon)$. 
For fixed $x$, the function $\phi_\epsilon (x-\cdot)$ is in $V$, 
so we can plug it in \eqref{equalityonV^*} instead of  $\varphi$, 
to get that almost surely, for all $t\in[0,T]$ 
$$
u^\epsilon_t(x) =u_0^\epsilon(x)+ \int_{(0,t]} v^{*\epsilon}_s(x) ds+  \int_{(0,t]} h^{k\epsilon}_s(x)dw^k_s
$$
$$
+  \int_{(0,t]} \int_Z K^\epsilon_s(z,x) \tilde{N} (ds,dz),
$$  
where for $g \in V^*$ we use the notation $g^\epsilon(x):= \<g, \phi_\epsilon(x- \cdot)\>$.
Note that $u_0^{\epsilon }$ is $\mathscr{F}_0\times \mathscr{B}(\mathbb{R}^d)$ measurable. Also $u^\epsilon, v^{*\epsilon}$ and $h^{k\epsilon}$ are jointly measurable in $(t,\omega,x)$, $\mathscr{F}_t \times \mathscr{B}(\mathbb{R}^d)$ measurable for each $t$,  and $K^\epsilon$ is $\mathscr{P}\times \mathcal{Z} \times \mathscr{B}(\mathbb{R}^d)$ measurable. 
It is also easy to see that there exists a constant $C_\epsilon$, depending on $\epsilon$, such that for all $t, \omega, x,z$
\[ |u^\epsilon_t(x)|\leq C_\epsilon |u_t|_H,\ |u_0^{\epsilon}(x)| \leq C_\epsilon |u_0 |_H, |v^{*\epsilon}_t|_H \leq C_\epsilon |v^*_t|_{V^*} \]
\[|v^{*\epsilon}_t(x)| \leq C_\epsilon |v^*_t|_{V^*},\ |h^{k\epsilon}_t(x)| \leq C_\epsilon |h^k_s|_H,\]
\begin{equation}                                                                             \nonumber
 \ |K^\epsilon_t(x,z)| \leq C_ \epsilon|K_t(z)|_H.
 \end{equation}
 One can also check that  for a constant $C$, for all $\epsilon$
$$
 |u^\epsilon_t|_H\leq C|u_t|_H,\ |u_0^\epsilon|_H \leq C |u_0 |_H, \ |K^\epsilon_t(z)|_H \leq C|K_t(z)|_H     
 $$
\begin{equation}                                                                              \nonumber
 |h^{k\epsilon}_t|_H \leq C |h^k_s|_H, 
 |v^{*\epsilon}_t|_{V^*} \leq  C|v^*_t|_{V^*},\ |u^\epsilon_t|_V\leq C|u_t|_V.
 \end{equation}
Now let $ \alpha_\delta, \beta_\delta, \gamma_\delta$ 
be as before, and fix $x$. By It\^o's formula 
(see for example \cite{IW} or \cite{SR}), 
for each $x$ there exists a set 
$\Omega_x$ of full probability, 
such that for all $\omega \in \Omega_x$ and $t \in [0,T]$ we have
$$
\gamma_\delta (u^\epsilon_t(x))
= \gamma_\delta(u_0^\epsilon(x))
+\int_{(0,t]} \beta_\delta (u^\epsilon_s(x))v^{*\epsilon}_s(x)ds
$$
$$
 +\sum _k 
\int_{(0,t]} \beta_\delta (u^\epsilon_s(x))h^{k\epsilon}_s(x)dw^k_s
+\frac{1}{2}\sum_k  
 \int_{(0,t]} \alpha_\delta(u^\epsilon_s(x))|h^{k\epsilon}_s(x)|^2 ds
 $$
$$
+\int_{(0,t]} \int_Z \beta_\delta (u^\epsilon_{s-}(x))K^\epsilon_s(z,x) \tilde{N} (ds,dz)
$$
$$
+\int_{(0,t]} \int_Z \gamma_\delta (u^\epsilon_s(x)+K^\epsilon_s(z,x))
$$
\begin{equation}                                                                             \label{equalityIto's}
-\gamma_\delta (u^\epsilon_{s-}(x))
-\beta_\delta (u^\epsilon_{s-}(x))K^\epsilon_s(z,x) N(dz, ds).
\end{equation}
One can redefine the stochastic integrals such that \eqref{equalityIto's} 
holds for all $(\omega,t,x)$.  Integrating \eqref{equalityIto's} over $\bR^d$, 
taking appropriate versions of the stochastic integrals  
and using the Fubini and the stochastic Fubini theorems 
we get for each $t \in [0,T]$,
\[
\int_{\mathbb{R}^d}\gamma_\delta (u^\epsilon_t(x))\,dx
= \int_{\mathbb{R}^d}\gamma_\delta(u_0^\epsilon(x))\,dx
+\int_{(0,t]} \int_{\mathbb{R}^d}
 \beta_\delta (u^\epsilon_s(x))v^{*\epsilon}_s(x)\,dx\, ds
\]
\[ 
\int_{(0,t]} \int_{\mathbb{R}^d} 
\beta_\delta (u^\epsilon_s(x))h^{k\epsilon}_s(x)\,dx\, dw^k_s
+\frac{1}{2} \sum_k \int_{(0,t]}
 \int_{\mathbb{R}^d} \alpha_\delta(u^\epsilon_s(x))|h^{k\epsilon}_t(x)|^2\,dx\, ds
 \]
\[
+\int_{(0,t]} \int_Z \int_{\mathbb{R}^d} \beta_\delta (u^\epsilon_{s-}(x))
K^\epsilon_s(z,x) dx \tilde{N} (ds,dz)
\]
\[+\int_{(0,t]} \int_Z  \int_{\mathbb{R}^d} \gamma_\delta (u^\epsilon_{s-}(x)
+K^\epsilon_{s-}(z,x))
\]
\begin{equation}                                                                                 \label{beforelimit}                                                                         
-\gamma_\delta (u^\epsilon_{s-}(x))-\beta_\delta (u^\epsilon_{s-}(x))
K^\epsilon_s(z,x) dxN(dz, ds) \ (a.s.).
\end{equation}
 For a stochastic Fubini theorem we refer to \cite{K1}, 
 noting that 
 the Fubini theorem there can be extended easily, 
 by obvious changes in its proof, to our situation. 
 Since each term in the above equation is a c\'adl\'ag process in $t$, we see that \eqref{beforelimit} holds almost surely, for all $t \in [0,T]$.  
 We claim that for each $t\in[0,T]$ both sides of \eqref{beforelimit}  
 converges in probability as $\epsilon \to 0$ to give that  
\[\int_{\mathbb{R}^d}\gamma_\delta (u_t(x))dx= \int_{\mathbb{R}^d}\gamma_\delta(u_0(x))dx+\int_{(0,t]}  \langle  v^*_s ,\beta_\delta (u_s) \rangle ds\]
\[ \int_{(0,t]} \int_{\mathbb{R}^d} \beta_\delta (u_s(x))h^k_s(x)dx dw^k_s+\frac{1}{2} \sum_k \int_{(0,t]} \int_{\mathbb{R}^d} \alpha_\delta(u_s(x))|h^k_s(x)|^2dx ds\]
\[+\int_{(0,t]} \int_Z \int_{\mathbb{R}^d} \beta_\delta (u_{s-}(x))K_s(z,x) dx \tilde{N} (ds,dz)\]
\[+\int_{(0,t]} \int_Z  \int_{\mathbb{R}^d} \gamma_\delta (u_{s-}(x)+K_s(z,x))\]
\begin{equation}                                                                               \label{afterlimit}
-\gamma_\delta (u_{s-}(x))-\beta_\delta (u_{s-}(x))K_s(z,x) dx N(dz, ds).
\end{equation}
holds almost surely for each $t\in[0,T]$.
We are going to show that 
each term in \eqref{beforelimit} 
converges in probability
to the corresponding one in \eqref{afterlimit}. 
 Since for any sequence $\epsilon_k\downarrow0$ we have 
 $u^{\epsilon_k}_t\to u_t$ in $L_2(\bR^d)$ for every $\omega\in\Omega$, 
 by the equality $a^2-b^2=(a-b)(a+b)$  
we have $(u^{\epsilon_{k}}_t)^2 \to u^2_t$ in $L_1(\bR^d)$. Thus for every 
$\omega\in\Omega$ 
by Lemma \ref{majorant}  there exist $g\in L_1(\bR^d)$ and a  subsequence,  
denoted  again by $\epsilon_{k}$, 
such that for all $k\geq1$
\[
 |\gamma_\delta (u^{\epsilon_k}_t(x))| 
 \leq \frac{(u^{\epsilon_k}_t(x))^2}{2}\leq \frac{g(x)}{2} \ \text{for all $x$}.
 \]
Since
$\gamma_\delta (u^{\epsilon_k}_t(x))
\to \gamma_\delta (u_t(x))$ 
for almost every $x$, as $k\to\infty$, by Lebesgue's theorem on dominated convergence we obtain
$$
\int_{\mathbb{R}^d}\gamma_\delta (u^{\epsilon_k}_t(x))dx
\to \int_{\mathbb{R}^d}\gamma_\delta (u_t(x))dx \quad\text{as $k\to\infty$}. 
$$
Thus, for  $\epsilon\downarrow0$ the left-hand side 
of \eqref{beforelimit} converges  to the left-hand side 
of \eqref{afterlimit} for every $\omega\in\Omega$, and hence also in probability, for each $t\in[0,T]$.
To see the convergence of the second term 
in the right-hand side of \eqref{beforelimit} we 
fix $(s, \omega)$ such that $u_s \in V$. 
Then it is straightforward to check that 
\[
|\beta_\delta (u^\epsilon_s) - \beta_\delta (u_s)|_V \to 0,\ \text{as} \ \epsilon \to 0.
\]
Taking into account the well-known fact that 
there exist $f^0_s$ and $f^{i}_s\in L^2(\bR^d)$ for $i=1,...,d$ such that 
$$
v^{*}_s=f^0_s+D_if^i_s, 
$$
we have 
$$
v^{*\epsilon}_s= f^{0\epsilon}_s+ D_i f^{i\epsilon}_s,  
$$
which gives
\[
|v^*_s-v^{*\epsilon}_s|_{V^*} \leq \sum_{i=0}^d|f^{i\epsilon}_s-f^i_s|_H\to 0, \ \text{as} \ \epsilon \to 0.
\]
Hence we conclude 
\[
\int_{\mathbb{R}^d}\beta_\delta (u^\epsilon_s(x))v^{*\epsilon}_s(x)dx  
= \langle v^{*\epsilon}_s, \beta_\delta (u^\epsilon_s) \rangle \to \langle v^*_s, \beta_\delta (u_s) \rangle. 
\]
Notice that there is a constant $C$ 
such that 
\[
\Big|
\int_{\mathbb{R}^n}\beta_\delta (u^\epsilon_s(x))v^{*\epsilon}_s(x)dx 
\Big|
\leq C(|u_s|^2_V+|v^*_s|^2_{V^*})
\]
for all $\epsilon>0$, $\omega\in\Omega$ and  $s\in[0,T]$. 
Therefore, almost surely 
\[
\int_{(0,t]} \int_{\mathbb{R}^d}
\beta_\delta (u^\epsilon_s(x))v^{*\epsilon}_s(x)dx ds 
\to \int_{(0,t]} \langle v^*_s, \beta_\delta (u_s) \rangle ds 
\quad\text{for all $t$}. 
\]
For the sum of the stochastic integrals against the Wiener processes 
we just note that almost surely for all $s\in[0,T]$
\[
\sum_k \Big|\int_{\mathbb{R}^d} 
\beta_\delta (u^\epsilon_s(x))h^{k\epsilon}_s(x)dx
-\int_{\mathbb{R}^d} \beta_\delta (u_s(x))h^k_s(x)dx\Big|^2 \to 0
\quad \text{as $\epsilon\downarrow0$},
\]
and 
\[
\sum_k \Big|\int_{\mathbb{R}^d} 
\beta_\delta (u^\epsilon_s(x))h^{k\epsilon}_s(x)dx-\int_{\mathbb{R}^d} 
\beta_\delta (u_s(x))h^k_s(x)dx\Big|^2 \]
\[ \leq 4 \sup_{t\leq T}|u_t|^2_{L^2}\sum_k |h^k_s|^2_{L^2}
\quad \text{for all $\epsilon>0$}.
\]
 Hence almost surely 
\[
\int_{(0,T]} \sum_k \Big|\int_{\mathbb{R}^d}
 \beta_\delta (u^\epsilon_s(x))h^{k\epsilon}_s(x)dx
 -\int_{\mathbb{R}^d} \beta_\delta (u_s(x))h^k_s(x)dx\Big|^2 ds \to 0,
  \]
which implies that for $\epsilon\downarrow0$ 
$$
\int_{(0,t]} \int_{\mathbb{R}^d} 
\beta_\delta (u^\epsilon_s(x))h^{k\epsilon}_s(x)\,dx\, dw^k_s\to
\int_{(0,t]} \int_{\mathbb{R}^d} 
\beta_\delta (u_s(x))h^{k}_s(x)\,dx\, dw^k_s
$$
in probability, uniformly in $t\in[0,T]$.
Note that for each $k$ we have 
\[
 \Big| \int_{\mathbb{R}^d} 
\alpha_\delta(u^\epsilon_s(x))|h^{k\epsilon}_s(x)|^2
-\alpha_\delta(u_s(x))|h^k_s(x)|^2dx\Big|
\]
\[ 
\leq \int_{\mathbb{R}^d} |(h^{k\epsilon}_s(x) )^2          -(h^k_s(x))^2|dx \]
\[ 
+  \int_{\mathbb{R}^d} |h^k_s(x)|^2|        \alpha_\delta(u^\epsilon_s(x))- \alpha_\delta(u_s(x))|dx \to 0.
\]
for each $\omega\in\Omega$ and $s\in[0,T]$. Moreover, 
\[
\Big| \int_{\mathbb{R}^d} \alpha_\delta(u^\epsilon_s(x))
|h^{k\epsilon}_s(x)|^2dx\Big| \leq |h^k_s|^2_H, 
\] 
where the right-hand side is almost surely integrable on $[0,T]$. 
Hence the almost sure convergence 
of the fourth term in the right-hand side of 
\eqref{beforelimit} follows.  
By the inequalities in \eqref{Taylor estimate}, similar arguments 
show the convergence of the last two terms in probability. 
We conclude that for each $t\in[0,T]$ equation \eqref{afterlimit}
 holds almost surely. Since the stochastic processes in both sides of 
 \eqref{afterlimit} are c\`adl\`ag processes, equation \eqref{afterlimit} holds almost surely 
 for all $t\in[0,T]$.

Now by letting  $\delta \to 0$ in \eqref{afterlimit}, using arguments similar to the previous ones, 
and keeping in mind the inequalities \eqref{Taylor estimate}
and the fact that for all $v \in V$
\[ |\beta_\delta(v) - v^+ |_V \to 0, \ |\beta_\delta(v)|_V  \leq |v|_{V},\]
we can finish the proof of the theorem for  $Q = \mathbb{R}^d$. 

We  reduce the case of a bounded Lipschitz domain $Q$ to that 
of the whole space by using the sequence $\phi_n$ from 
Lemma \ref{trancating sequence}.  Remember that $\phi_n$ 
has compact support in $Q$. Thus for a function $\eta$ on $Q$ 
we denote  
by $\phi_n\eta$, not only the function defined on $Q$ by the multiplication of $\phi_n$ and $\eta$, but also its extension to zero outside of $Q$. 
Notice that when 
$u$ satisfies \eqref{equalityforallt} on  $Q$, then $\phi_n u$ 
satisfies 
\[\phi_nu_t =\phi_n u_0+ \int_{(0,t]} \phi_n v^*_s ds + \int_{(0,t]} \phi_n h^k_sdw^k_s\]
\[ + \int_{(0,t]} \int_Y \phi_n K_s(z) \tilde{N} (ds,dz)\]
on the whole $\mathbb{R}^d$,  
where the functional $\phi_nv^*$ is defined by 
$$ 
 \langle \phi_n v^*_s, g\rangle 
 := \langle  v^*_s, \phi_n g\rangle_{Q}
$$
 for $g \in H^1 ( \mathbb{R}^d)$. 
 The notation $\langle \cdot,\cdot \rangle_{Q}$ means the 
 duality product between $H^{1}_{0}(Q)$ 
 and $H^{-1}(Q)$. Notice that 
 $\langle  v^*_s, \phi_n g\rangle_{Q}$ is well defined, 
 since the restriction of $\phi_n g$ to $Q$ 
 belongs to $H^1_0 (Q)$. 
 Then by the result  in the case of the whole space, 
 we have
 \[
 \int_Q\phi_n^2 |u^+_t|^2dx=\int_Q|\phi_n u_0^
 +|^2dx + 2\int_{(0,t]} \langle v^*_s , \phi^2_n u^+_s \rangle_Q  ds\]
 \[ 
 +2\int_{(0,t]} \int_Q\phi^2_nh^k_su^+_sdxdw^k_s
+ \int_{(0,t]} \int_Q \sum_k |\mathbb{I}_{\{ \phi_n u_s >0\}} \phi_n h^k_s|^2dx ds
\]
\[
+ \int_{(0,t]} \int_Z \int_Q2K_s(z)\phi_n^2u^+_{s-}dx \tilde{N}(ds,dz)
\]
$$
+ \int_{(0,t]} \int_Z \int_Q|\phi_n (u_{s-}+K_s(z))^+|^2-|\phi_n u^+_{s-}|^2- 2K_s(z)\phi_n^2u^+_{s-} dx N (dz,ds),
$$
since $\phi_n$ is supported in $Q$.
It is now easy to take $n\to \infty$ here 
to finish the proof of the theorem. 
We only note that for the second term on the right-hand side  
we have by Lemma \ref{trancating sequence} 
and Remark \ref{remarktrancating}  
$$
\langle v^*_s, \phi^2_n u^+_s \rangle_Q 
\to \langle v^*_s, \ u^+_s \rangle_Q 
\quad\text{for all $\omega,s$}, 
$$
and for a constant $C$,
$$
\langle v^*_s, \phi^2_n u^+_s \rangle_Q \leq C|v^*_s|_{V^*} |u_s|_V
\quad\text{for all $n$}. 
$$ 
\end{proof}
\mysection{Comparison Theorems}

In this section we present our comparison theorems for  two types of equations.  Together with the space  $(Z, \mathcal{Z})$, we  consider another measurable space $(F,\mathfrak{F})$, a quasi left-continuous, adapted  point 
process $(\bar{p}_t)_{t\in[0,T]}$ in $F$, and two $\sigma-$finite measures $\pi^{(1)}, \ \pi^{(2)}$ on $F$. Let $M(dt,d\zeta)$ be the corresponding random measure on $[0,T] \times F$. We assume that its compensator is $dt \pi^{(2)}(d\zeta)$ and we write
$$
\tilde{M}(dt,d\zeta) = M(dt,d\zeta) -dt\pi^{(2)}(d\zeta).
$$
First we consider the equation
\begin{align}                                                                               
\label{SPDE1}
\nonumber du_t(x)=& \{ L_tu_t(x)+f_t(x,u_t(x), \nabla u_t(x))  \}\,dt\\          \nonumber
&+G^k_t(u)(x) dw^k_t+\int_Z g_t(x,z,u_{t-}(x))\tilde{N}(dt,dz),  \\
\end{align}
for $(t,x)\in[0,T]\times Q$, 
with initial condition
\begin{equation}                                        \label{initial 1}
u_0(x)=\psi(x),\quad x\in Q,  
\end{equation}
where
\begin{align}
\nonumber
 L_tu(x)&= D_j( a^{ij}_t(x)D_iu(x) )+\mathcal{I}^{(1)}_tu_t(x),  \\                  \nonumber
\mathcal{I}^{(1)}_tu(x)&=\int_F [u(x+c_t(x,\zeta))- u(x)- c_t(x,\zeta) \cdot \nabla u(x)]m_t(x,\zeta) \pi^{(1)} (d\zeta), \\
              \nonumber
G^k_t(u)(x)&=\phi^{ik}_t(x)D_iu(x) +\sigma^k_t(x,u(x)).
\end{align}

We make the following assumptions. Let $K>0$ denote a constant.

\begin{assumption}$ $                                                       \label{Assumptionforspde}

i)  The coefficients $a^{ij}$, are real-valued 
$\mathscr{P} \times \B (Q)$ measurable functions 
on $\Omega\times[0,T]\times Q$ and are  
bounded by $K$ for every $i,j=1,...,d$. 
The coefficient $\phi^i=(\phi^{ik})_{k=1}^{\infty}$ 
is an $l_2$-valued 
$\mathscr{P} \times \B (Q)$-measurable function 
on $\Omega\times[0,T]\times Q$ 
for every $i=1,2,...,d$, such that 
$$
\sum_i \sum_k |\phi^{ik}_t(x)|^2 \leq K \quad 
\text{for all $\omega$, $t$ and $x$}.
$$

ii) $f$ is a real valued 
$\mathscr{P} \times \B(Q) \times \B( \mathbb{R}) \times \B(\bR^d)$
-measurable function on 
$\Omega \times [0,T] \times Q \times \mathbb{R} \times \bR^d$,
 and $\sigma=(\sigma^k)_{k=1}^\infty$ is a 
$\mathscr{P} \times \B(Q) \times \B( \mathbb{R})$-measurable 
function on  
$\Omega \times [0,T] \times Q \times \mathbb{R}$, 
with values in $l_2$.   The function $g$ is defined on 
 $\Omega \times [0,T] \times Q \times Z \times \mathbb{R}$ 
 with values in $\bR$ and it is 
 $\mathscr{P} \times \B(Q) \times  \mathcal{Z}  \times \B( \mathbb{R})$-measurable.  
 We assume that there exists a
 predictable  process $\bar h_t$ with values in $L_2(Q)$, such that  almost surely $\bar h \in L_2([0,T]\times Q)$,  
 and for all $\omega,t,x,z,r,r'$ 
$$
 |f_t(x,r,r')|^2+\sum_k |\sigma^k_t(x,r)|^2 + \int_Z |g_t(x,z,r)|^2 \nu(dz) 
 $$
 $$
 \leq K|r|^2+K|r'|^2+|\bar h_t(x)|^2.
 $$

iii) $\psi$ is an $\mathscr{F}_0$-measurable random variable in $L_2(Q)$.

iv) There exists a constant $\varkappa >0$ 
such that for all $\omega,t,x$ and for all $ \xi =( \xi _1,...\xi _d) \in \mathbb R^d$ we have
\[ 
a^{ij}_t(x)\xi_i \xi_j  -\frac{1}{2}  \phi^{ik}_t(x)\phi^{jk}_t(x)\xi_i \xi _j 
\geq \varkappa |\xi |^2.
\]

v) For all $\omega,t,x,z,r_1,r_2$
$$ 
 \sum_k |\sigma^k_t(x,r_1)-\sigma^k_t(x,r_2)|^2\leq K|r_1-r_2|^2.
$$

\end{assumption}

\begin{assumption}               \label{continuity of f}
The function  $f_t(x,r,r')$ is continuous in $r$, for each $\omega,t,x$ and $r'$. 
\end{assumption}

\begin{assumption}             \label{monotnicity of f}
 For all $\omega,t,x,r_1,r_2,r'_1,r'_2$
$$
2(r_1-r_2)(f_t(x,r_1,r_1')-f_t(x,r_2,r'_1))
$$
$$
 + \int_Z|g_t(x,z,r_1)-g_t(x,z,r_2)|^2 \nu(dz) 
\leq K|r_1-r_2|^2,
$$
and
$$
|f_t(x,r_1,r'_1)-f_t(x,r_1,r'_2)| \leq K|r'_1-r'_2|. 
$$
\end{assumption}
\begin{assumption}                                              \label{assumptionforcomparison}
 The function $r+g_t(x,z,r)$ is non-decreasing in $r$ for all 
 $\omega,t,x,z$. 
\end{assumption}
\begin{assumption}                                                  \label{assumptions on c}
The function $c$ maps 
$\Omega \times [0,T] \times \bR^d \times F$ into $\mathbb R^d$, 
it is  
 $\mathscr{P}\times \B(\bR^d) \times \mathfrak{F}$-measurable,  
 and there exists an $ \mathfrak{F}$-measurable real function $\bar c$ on $F$
 such that 

(i) $|c_t(x,\zeta) |\leq \bar c(\zeta), \text{for all $\omega, t,x, \zeta$},$

(ii) $\int_F \bar c^2(\zeta) \pi^{(1)}(d\zeta) \leq K,$

(iii) $|c_t(x,\zeta)-c_t(y,\zeta)| \leq \bar c(\zeta) |x-y|$,  for all $\omega, t, x, y, \zeta$.

\end{assumption}

\begin{assumption}                                                             \label{assumptions on m}
The function $m$ maps 
$\Omega \times [0,T] \times \bR^d \times F$ 
into  $\bR$,  
it is 
$\mathscr{P}\times \B(\bR^d) \times \mathfrak{F}$- 
measurable, and we have

(i) $0 \leq m_t(x, \zeta) \leq K$, for all $\omega, t,x, \zeta$,

(ii)$|m_t(x,\zeta)-m_t(y,\zeta)| \leq K|x-y|, \text{ for all $\omega, t,x,y, \zeta$}$.
\end{assumption}

\begin{assumption}                      \label{smoothness of c}
The functions $c^l_t(x, \zeta)$, $l=1,...,d$, 
are twice continuously differentiable in $x$, for each $\omega, t, \zeta$, and

(i) $|D_i c^l_t(x, \zeta )|\leq K$, $|D_{ij} c^l_t(x,\zeta)| \leq K$,
for all $i,j,l=1,...,d$,

(ii) $K^{-1} \leq |det(\mathbb{I}+ \theta \nabla c_t(x,\zeta) )|$ 
\newline
for all $\omega, t, x, \zeta$ and $ \theta \in [0,1]$, where $\mathbb{I}$ denotes the identity matrix.
\end{assumption}

\begin{remark} 
Denote by $T_{\theta,t,\zeta}$ the mapping $x \mapsto x+\theta c_t(x,\zeta)$, for fixed $\omega,t,\theta$ and $\zeta$. By virtue of the inverse function theorem, it follows from (ii) of Assumption  \ref{smoothness of c}  that $T_{\theta,t,\zeta}$ is a $local$ diffeomorphism. In addition, by the first inequality in (i) and by (ii) of Assumption \ref{smoothness of c},   there exists a constant $\gamma >0$, such that the norm of the matrix $(\mathbb{I}+ \theta \nabla c_t(x,\zeta) )^{-1}$ is uniformly bounded by $\gamma$. Hence, by Hadamard's theorem (see, eg, Theorem 5.1.5 in \cite{Be}), $T_{\theta,t,\zeta}$ is a $global$ diffeomorphism,  for fixed $\omega,t,\theta$ and $\zeta$. We denote by $J_{\theta,t,\zeta}$ the inverse of $T_{\theta,t,\zeta}$. Notice that for fixed $\omega,t,\theta$,$\zeta$ and for all $j=1,...,d$, the functions $J^j_{\theta,t,\zeta}(x)$ are twice continuously in $x$, and their first and second order derivatives are uniformly bounded.

\end{remark}

\begin{remark}
Under Assumptions \ref{assumptions on c} through 
\ref{smoothness of c}, $\mathcal{I}^{(1)}_t$ is 
a bounded linear operator from $H^1_0(Q)$ into $H^{-1}(Q)$ for fixed $(\omega,t)$, 
and for all $u,v \in H^1_0(Q)$ the process 
$\<\mathcal{I}^{(1)}_tu, v \>$ is predictable. 
To see this, consider first the case 
  $Q=\bR^d$.
For $u \in C^\infty_c(\bR^d)$(even for $u\in W^2_2(\bR^d)$) 
one can easily see that 
$\mathcal{I}^{(1)}_tu(x)$
is a function in $L_2(\bR^d)$. Then for $v \in C_c^\infty(\bR^d)$ we have by Taylor's formula
\begin{align}                                                                                          \label{representation_of_operator3}
        \nonumber    & 
         (\mathcal{I}^{(1)}_tu,v)= \\
         \nonumber &\int_0^1 (1-\theta) 
\int_F \int_{\bR^d} 
D_{ki}u(T_{\theta, t,\zeta})c^i_t( x,\zeta)c^k_t( x,\zeta) m_t(x,\zeta) v(x)\,dx \pi^{(1)}(d\zeta)\,d\theta\\ 
&=\int_0^1 (\theta-1) 
\int_F \int_{\bR^d} 
D_iu(x+\theta c_t(x,\zeta)) D_j
( q^{ij}_t(x,\zeta, \theta) v(x))\,dx \pi^{(1)}(d\zeta)\,d\theta,
\end{align}
where the last equality is obtained by  integration by parts, 
and $q^{ij}$ is given by 
$$
q^{ij}_t(x,\zeta, \theta):= \sum_{l=1}^d c^l_t(x,\zeta)c^i_t( x,\zeta) m_t(x,\zeta)  D_lJ^j_{\theta, t,\zeta}(T_{\theta,t,\zeta}(x)).
$$
Due to Assumptions 
\ref{assumptions on c} through \ref{smoothness of c}
for a constant $N=N(d,K)$, 
\begin{equation}                           \nonumber
         ( \mathcal{I}^{(1)}_tu,v)\leq  N |u|_{H^1(\bR^d)}|v|_{H^1(\bR^d)}, 
\end{equation}
which shows
 that $\mathcal{I}^{(1)}_t$ extends uniquely 
 to a bounded linear  operator from $H^1$ to $H^{-1}$, 
 and the duality product $\langle {\mathcal I}^{(1)}_{t}u,v\rangle $ 
 is given by the right-hand side of
\eqref{representation_of_operator3}. 
In case $Q$ is a bounded Lipschitz domain, 
one can define the action of $\mathcal{I}^{(1)}_tu$ on $v\in H^1_0(Q)$ again by \eqref{representation_of_operator3}, where $u$ and $v$ this time are extended to zero outside of $Q$. 
For further study of these operators we refer to \cite{GM}.
\end{remark}

\begin{definition}
A strongly c\`adl\`ag adapted process $u$ with values in $L_2(Q)$ is called a solution of the problem \eqref{SPDE1}-\eqref{initial 1}
 if

i) $u_t \in H^1_0( Q )$ for $dP \times dt$ almost 
every $(\omega,t) \in \Omega \times [0,T]$, 

ii) $\int_{(0,T]} |u_t|^2_{H_0^1} dt < \infty$ $(a.s.)$, 

iii) for all $\varphi \in H^1_0(Q)$ we have almost surely

$$
 (u_t,\varphi)=(\psi,\varphi)
 + \int_{(0,t]} \{- (a^{ij}_s D_iu_s,D_j\varphi)
+(f_s(u_s, \nabla u_s),\varphi)
+\< \mathcal{I}^{(1)}_su_s,\varphi\> \} ds
$$
$$
+  \int_{(0,t]}\{ (\phi^{ik}_s D_iu_s ,\varphi)
+(\sigma^k_s(u_s), \varphi)\}dw^k_s
+ \int_{(0,t]} \int_Z (g_s(z,u_{s-}),\varphi) \tilde{N}(dz,ds)
$$
for all $t \in [0,T]$, where $(\cdot, \cdot)$ is the inner product in $L_2(Q)$. 

\end{definition}

\begin{theorem}         \label{existence equation 1}
Let Assumptions \ref{Assumptionforspde} through \ref{monotnicity of f} and \ref{assumptions on c} through \ref{smoothness of c} hold. Then there exists a unique solution of the problem \eqref{SPDE1}-\eqref{initial 1}.
\end{theorem}

After some preliminaries we will see that Theorem \ref{existence equation 1} follows easily from Theorems 2.9 and 2.10 from \cite{G3}.

Together with  \eqref{SPDE1}-\eqref{initial 1} let us also consider the problem

\begin{align}                                                                      
\nonumber dv_t(x)=& \{ L_tv_t(x)+F_t(x,v_t(x), \nabla v_t(x)) \}\,dt\\                                           \label{SPDE1,2}
&+G^k_t(v)(x) dw^k_t+\int_Z g_t(x,z,v_{t-}(x))\tilde{N}(dt,dz), \\
                        \label{initial 1,2}
v_0(x)=&{\Psi}(x),
\end{align}
where $F$ satisfies ii) from Assumption \ref{Assumptionforspde} and $\Psi$ is an $\mathscr{F}_0$-measurable random variable in $L_2(Q)$.
\begin{theorem}                                                                 \label{Maintheorem}
Suppose that Assumptions \ref{Assumptionforspde} 
and  
\ref{assumptionforcomparison} through \ref{smoothness of c} hold. 
Let $u$ and $v$ be solutions of the problems \eqref{SPDE1}-\eqref{initial 1} and \eqref{SPDE1,2}-\eqref{initial 1,2} respectively.
Suppose that either $f$ or $F$ satisfy Assumption \ref{monotnicity of f}.
Let $f \leq F$ and $\psi \leq\Psi$. 
Then almost surely, for all $t \in [0,T]$ we have $u_t(x) \leq v_t(x)$ for almost every $x \in Q$.
\end{theorem} 
\begin{remark} 
Assumption \ref{assumptionforcomparison} cannot be 
omitted in Theorem \ref{Maintheorem}. Consider  for example 
the SDE
\[ u_t=1-\int_{(0,t]} 2 u_{s-}d\tilde{N}_s,\]
where $N_t$ is a Poisson process with intensity one.  
Let $\tau$ be the time that the first jump of $N$ occurs. 
Then $P( \tau \leq T)>0$. Since $u_t= e^{-2t}$ on $[0, \tau)$, 
 one can see that on the set $\{ \tau \leq T\}$ 
 we  have $u( \tau) = - e ^{-2\tau} < 0$. 
\end{remark}
The second equation that we will deal with is 

\begin{align}
 \label{SPDE2}
\nonumber  du_t(x)=& \{\mathcal{L}_tu_t(x)+f_t(x,u_t(x), \nabla u_t(x)) \}dt \\
&+G^k_t(u_t)(x) dw^k_t   + \int_F S_{t,\zeta}u_{t-}(x) \ \tilde{M}(ds,d\zeta)
\end{align}
for $(t,x)\in[0,T]\times \bR^d$, 
with initial condition
\begin{equation}                                                        \label{initial 2}
u_0(x)=\psi(x),\quad x\in \bR^d,  
\end{equation}
where 
\begin{align}  \nonumber
 \mathcal{L}_tu(x)
 &= L_tu(x)+\mathcal{I}^{(2)}_tu(x),\\                  \nonumber
\mathcal{I}^{(2)}_tu(x)
&=\int_F [\lambda_t(x+b_t(\zeta), \zeta )u(x+b_t(\zeta))- \lambda_t(x,\zeta)u(x)\\
&- b_t(\zeta) \cdot \nabla( \lambda_t(x,\zeta)u(x))]\pi^{(2)}(d\zeta),\\
S_{t,\zeta} u(x)&=\lambda_t(x+b_t(\zeta),\zeta)u(x+b_t(\zeta))-\lambda_t(x,\zeta)u(x) \\
\nonumber
&+(\lambda_t(x,\zeta)-1)u(x).
\end{align}
 Obviously, if we ask later  for some of the previous assumptions 
 to hold for equation \eqref{SPDE2}, we mean with $g \equiv 0$.
\begin{assumption}                                                                    \label{assumptionequation2}

The function $b$ maps 
$\Omega \times [0,T] \times F$ into $\bR^d$, it is
$\mathscr{P} \times \mathfrak{F}$-measurable, and 
there exists an $\mathfrak{F}$-measurable real function $\bar b$ on $F$, 
such that for all $\omega, t$ and $\zeta$ we have 

$$
|b_t(\zeta)| \leq \bar b(\zeta), 
 \ \ \int_F \bar b^2(\zeta)\pi^{(2)}(d\zeta) \leq K.
$$
The function $\lambda$ maps $\Omega \times [0,T] \times \bR^d \times F$ to $[0, \infty)$, is
 $\mathscr{P} \times \B(\bR^d) \times \mathfrak{F}$-measurable,
it  is twice continuously differentiable in $x$ for all $\omega, t$,$\zeta$, and we have
$$
|\lambda_t(x, \zeta)|+|\nabla\lambda_t(x,\zeta)| +|\nabla^2\lambda_t(x,\zeta)|\leq K, 
$$
$$ 
|1-\lambda_t(x,\zeta)| \leq{\bar b}(\zeta),  \ \text{for all $\omega,t,x,\zeta$.}
$$
\end{assumption}

It is easy to see that due to Assumption \ref{assumptionequation2} for every $t\in[0,T]$ and 
$\omega\in\Omega$ the mapping ${\mathcal I}^{(2)}_t$, defined in the same way as 
${\mathcal I}_t^{(1)}$, is a bounded linear operator from $H^1$ to $H^{-1}$, 
and $\langle{\mathcal I}^{(2)}\phi,\varphi\rangle$ is 
a predictable process for any $\phi,\varphi\in H^1$.

The solution of the problem  \eqref{SPDE2}-\eqref{initial 2} is understood in the same sense as  
 that of \eqref{SPDE1}-\eqref{initial 1},  and we have the following existence and uniqueness result.
 \begin{theorem}         \label{existence equation 2}
Let Assumptions \ref{Assumptionforspde} through \ref{monotnicity of f} and \ref{assumptions on c} through \ref{assumptionequation2} hold. Then there exists a unique solution of the problem \eqref{SPDE2}-\eqref{initial 2}.
\end{theorem}
We also consider the problem
\begin{align}                                                                      
\nonumber  dv_t(x)=& \{\mathcal{L}_tv_t(x)+F_t(x,v_t(x), \nabla v_t(x)) \}dt \\ \label{SPDE 2, 2}
&+G^k_t(v)(x) dw^k_t   + \int_F S_{t,\zeta}v(x) \ \tilde{M}(ds,d\zeta),\\ \label{initial 2,2}
v_0(x)=&{\Psi}(x),
\end{align}
where $F$ and $\Psi$ are as in \eqref{SPDE1,2}-\eqref{initial 1,2}.
\begin{theorem}
                                                                 \label{Maintheorem2}
Suppose that Assumptions \ref{Assumptionforspde}, 
and \ref{assumptions on c}  through \ref{assumptionequation2} hold. Let $u$ and $v$ solve \eqref{SPDE2}-\eqref{initial 2} and \eqref{SPDE 2, 2}- \eqref{initial 2,2}
respectively. Suppose that either $f$ or $F$ 
satisfy Assumption \ref{monotnicity of f}.
Let $f \leq F$ and $\psi \leq\Psi$. 
Then almost surely, for all $t \in [0,T]$ 
we have $u_t(x) \leq v_t(x)$ for almost every $x \in \bR^d$.
\end{theorem}

\section{Auxiliary Facts}
 In this section we present some lemmas that we will 
 need for the proofs of Theorems \ref{existence equation 1}  through \ref{Maintheorem2}.  
 The following is well known (see, e.g., 
 \cite{MM}, or exercise 1.3.19 in \cite{KLp}, or some more general results 
 in \cite{TW}).

\begin{lemma}                                              \label{positivepartis continuous}
Let $u\in W^1_p(Q)$. Let $u_n\in W^1_p(Q)$ such that 
$|u_n-u|_{W^1_p} \to 0$ as $n\to\infty$. 
Then we have 
$|u_n^+-u^+|_{W^1_p} \to 0$.
\end{lemma}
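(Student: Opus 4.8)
The plan is to split the $W^1_p$-norm into the contribution of the function and the contributions of its first-order derivatives, relying on the standard chain rule for the positive part in Sobolev spaces: for any $w \in W^1_p(Q)$ one has $w^+ \in W^1_p(Q)$ with $D_i(w^+) = I_{w>0}\, D_i w$ almost everywhere. Given this, it suffices to establish $|u_n^+ - u^+|_{L_p} \to 0$ together with $|I_{u_n>0}\, D_i u_n - I_{u>0}\, D_i u|_{L_p} \to 0$ for each $i=1,\dots,d$, since the sum of these quantities controls $|u_n^+ - u^+|_{W^1_p}$.

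The convergence of the function part is immediate, because $r \mapsto r^+$ is $1$-Lipschitz, so $|u_n^+ - u^+|_{L_p} \leq |u_n - u|_{L_p} \leq |u_n - u|_{W^1_p} \to 0$. For the derivatives I would use the decomposition
$$
I_{u_n>0}\, D_i u_n - I_{u>0}\, D_i u = I_{u_n>0}(D_i u_n - D_i u) + (I_{u_n>0} - I_{u>0})\, D_i u.
$$
The first term is harmless, since its $L_p$-norm is at most $|D_i u_n - D_i u|_{L_p} \to 0$. The second term is the heart of the matter and is where I expect the only real difficulty.

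The obstacle is that $I_{u_n>0}$ need not converge to $I_{u>0}$ pointwise in any controlled way: $W^1_p$-convergence only yields almost-everywhere convergence along a subsequence, and the indicators may oscillate near the set $\{u=0\}$. The resolution rests on two facts. First, $D_i u = 0$ almost everywhere on the level set $\{u=0\}$, so on that set the second term vanishes a.e.\ irrespective of the indicators. Second, on $\{u\neq 0\}$, at any point where $u_n \to u$ the sign of $u_n$ eventually agrees with that of $u$, so $I_{u_n>0} \to I_{u>0}$ there. Passing to a subsequence along which $u_n \to u$ a.e., the integrand $(I_{u_n>0} - I_{u>0})\, D_i u$ therefore tends to $0$ a.e.\ and is dominated by $2|D_i u| \in L_p$; Lebesgue's dominated convergence theorem then gives convergence to $0$ in $L_p$ along that subsequence. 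To upgrade this to the full sequence I would invoke the subsequence principle: every subsequence of $(u_n)$ still converges to $u$ in $W^1_p$, hence admits a further a.e.-convergent subsequence to which the argument above applies, forcing $|u_n^+ - u^+|_{W^1_p} \to 0$.
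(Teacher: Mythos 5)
The paper offers no proof of this lemma at all --- it is introduced with ``The following is well known'' and left as a citation-free standard fact --- so there is nothing to compare against; your argument supplies the standard proof. It is correct and complete: the $1$-Lipschitz bound handles the $L_p$ part, the chain rule $D_i(w^+)=I_{w>0}D_iw$ reduces the gradient part to the decomposition you give, the observation that $D_iu=0$ a.e.\ on $\{u=0\}$ combined with eventual sign agreement on $\{u\neq 0\}$ makes dominated convergence work along an a.e.-convergent subsequence, and the subsequence principle upgrades this to the full sequence. The two facts you take as given (the positive-part chain rule and the vanishing of $D_iu$ a.e.\ on the level set $\{u=0\}$) are indeed standard and exactly the right ingredients; no gap.
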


For the next three lemmas, we assume that  
Assumptions \ref{assumptions on c} 
 through
\ref{assumptionequation2} hold. For $u\in C^\infty_c(\bR^d)$, let us define the quantities,
$$
\varrho_t(u):=\int_{\bR^d} \int_F \left(\lambda_t\left(x+b_t\left(\zeta\right)\right) u \left(x+b_t\left(\zeta\right)\right)\right)^2
-(\lambda_t(x,\zeta)u(x))^2
$$
$$
-2b_t(z) \cdot \nabla\left(\lambda_t \left(x,\zeta \right)u\left(x \right) \right)\lambda_t\left(x, \zeta \right) u\left( x \right) \pi^{(2)} (d\zeta) dx,
$$
$$
\tilde{\varrho}_t(u):=\int_{\bR^d} \int_F [\left(\lambda_t\left(x+b_t\left(\zeta\right)\right) u \left(x+b_t\left(\zeta\right)\right)\right)^+]^2
-[(\lambda_t(x,\zeta)u(x))^+]^2
$$
$$
-2b_t(z) \cdot \nabla\left(\lambda_t \left(x,\zeta \right)u\left(x \right) \right)\lambda_t\left(x, \zeta \right) u^+\left( x \right) \pi^{(2)} (d\zeta) dx.
$$
\begin{lemma}                            \label{estimate I(u^2)}
 For any $u \in C^\infty_c(\bR^d)$, 
$\omega\in\Omega$, $t\in[0,T]$
and $ \varepsilon >0 $ we have
\begin{equation}                            \label{estimate u^2}
\int_{\bR^d} \mathcal{I}^{(1)}_t u^2 (x) \ dx 
\leq \varepsilon |u|_{H^1(\bR^d)}^2+N(\varepsilon)|u|^2_{L_2(\bR^d)},
\end{equation}
\begin{equation}                              \label{estimate u^+^2}
\int_{\bR^d} \mathcal{I}^{(1)}_t(u^+)^2 (x) \ dx 
\leq \varepsilon |u^+|_{H^1(\bR^d)}^2+N(\varepsilon)|u^+|^2_{L_2(\bR^d)},
\end{equation}
\begin{equation}                       \label{estimate varrho}
\varrho_t(u) \leq \varepsilon |u|_{H^1(\bR^d)}^2+N(\varepsilon)|u|^2_{L_2(\bR^d)},
\end{equation}
\begin{equation}                        \label{estimate varrho tilde}
\tilde{\varrho}_t(u) \leq \varepsilon |u^+|_{H^1(\bR^d)}^2+N(\varepsilon)|u^+|^2_{L_2(\bR^d)},
\end{equation}
where the constant $N(\varepsilon)$ depends 
only on $\varepsilon$, $K$ and $d$.
\end{lemma}
\begin{proof}
We prove \eqref{estimate u^+^2}. 
For $\delta>0$ let ${\mathcal I}^{(1\delta)}$ and 
$\bar{\mathcal I}^{(1\delta)}$ denote the operators defined 
as ${\mathcal I}^{(1)}$ with 
$F$ replaced by 
$$
F_{\delta}=\{\xi\in F:\bar c(\xi)<\delta\}
$$ 
and by $F^c_{\delta}=F\setminus F_{\delta}$, 
respectively.  
Then clearly, 
\begin{equation}                                                                                      \label{I(u^2)}
\int_{\bR^d} \mathcal{I}^{(1)}_t(u^+)^2(x)dx
= \int_{\bR^d} \mathcal{I}^{(1\delta)}_t(u^+)^2(x)\,dx
+\int_{\bR^d} \bar{\mathcal{I}}^{(1\delta)}_t(u^+)^2(x)\,dx.
\end{equation}
The first term on the right-hand side is equal to 
$$
\int_0^1 (1-\theta) \int_{F_\delta} \int_{\bR^d} D_{ij}(u^+)^2(x+\theta 
c_t(x,\zeta))
$$
$$
\times c^i_t(x,\zeta)c^j_t(x,\zeta) m_t(x,\zeta)  dx \pi^{(1)}(d\zeta) d\theta
$$
$$
=E_1(t,\delta)+ E_2(t,\delta),
$$
where
$$
E_1(t,\delta)=\int_0^1 (1-\theta) \int_{F_\delta} \int_{\bR^d}2D_iu^+(x+\theta 
c_t(x,\zeta))D_ju^+(x+\theta 
c_t(x,\zeta))
$$
$$
\times  c^i_t(x,\zeta)c^j_t(x,\zeta) m_t(x,\zeta)  dx \pi^{(1)}(d\zeta) d\theta, 
$$
$$
E_2(t,\delta)=\int_0^1 (1-\theta) \int_{F_\delta} \int_{\bR^d}2u^+(x+\theta 
c_t(x,\zeta))D_{ij}u(x+\theta 
c_t(x,\zeta))
$$
$$
\times  c^i_t(x,\zeta)c^j_t(x,\zeta) m_t(x,\zeta)  dx \pi^{(1)}(d\zeta) d\theta.
$$
Using Assumptions \ref{assumptions on c}, \ref{assumptions on m} and \ref{smoothness of c}, 
we see after a change of variables that 
$$
|E_1(t,\delta)| \leq C(\delta) C|u^+|^2_{H^1(\bR^d)},
$$
where $C(\delta)= \int_{F_\delta} \bar c^2(\zeta) \pi(dz)$ 
and $C$ is a constant depending only on $K$ and $d$. For $E_2$ we have
$$
E_2(t,\delta)=\int_0^1 (1-\theta) \int_{F_\delta} \int_{\bR^d}2D_j(D_iu(x+\theta 
c_t(x,\zeta)) 
$$
$$
\times q^{ij}_t(x,\zeta, \theta) u^+(x+\theta 
c_t(x,\zeta))\, dx \pi^{(1)}(d\zeta)\, d\theta.
$$
By integration by parts and using the Assumptions \ref{assumptions on c}, \ref{assumptions on m} and \ref{smoothness of c} again we see that 
$$
|E_2(t,\delta)| \leq C(\delta) C|u^+|^2_{H^1(\bR^d)}.
$$
For the second term in \eqref{I(u^2)}  
by Young's inequality and Assumptions \ref{assumptions on m}, 
\ref{assumptions on c}, we have
$$
\int_{\bR^d} \bar{\mathcal{I}}^{(1\delta)}_t(u^+)^2(x) \ dx
 \leq \gamma |u|^2_{H^1(\bR^d)}+C(\gamma)|u|^2_{L_2(\bR^d)},
 $$
 for all $\gamma > 0$, where $C(\gamma)$ 
 depends only on $\gamma$ and $K$.
 Putting these estimates together  
 and choosing $\delta$ and $\gamma$ sufficiently small, we finish the proof  of  \eqref{estimate u^+^2}. One can repeat the same calculation with $c$ replaced by $b$, $m=1$ and $\lambda u$ in place of $u$ to get \eqref{estimate varrho tilde}. Also \eqref{estimate I(u^2)} and \eqref{estimate varrho} can be proved in the same way.
\end{proof}
\begin{lemma}                            \label{estimate positive part}
For any $u \in H^1_0(Q)$, 
$\omega\in\Omega$, $t\in [0,T]$  and $ \varepsilon >0 $ we have
\begin{equation}                      \label{equ estimate u}
2\<\mathcal{I}^{(1)}_tu,u
 \> \leq \varepsilon |u|_{H^1_0(Q)}^2+N(\varepsilon)|u|^2_{L_2(Q)},
\end{equation}
\begin{equation}                      \label{equ estimate positive part}
2\<\mathcal{I}^{(1)}_tu,u^+
 \> \leq \varepsilon |u^
 +|_{H^1_0(Q)}^2+N(\varepsilon)|u^+|^2_{L_2(Q)},
\end{equation}
 where the constant $N(\varepsilon)$ 
depends only on $\varepsilon$ and $K$ and $d$.
\end{lemma}
\begin{proof}
We prove \eqref{equ estimate positive part}. It suffices to prove it  for $Q=\bR^d$.
Due to Lemma \ref{positivepartis continuous} 
and the continuity of the operator 
$\mathcal{I}^{(1)}_t:H^1\to H^{-1}$, we may and will also assume that $u\in C^\infty_c(\bR^d)$. 
Notice that for any $\alpha,\beta\in\bR$ 
\begin{equation}                          \label{arithmetic inequality}
2(\beta-\alpha)\alpha^+\leq(\beta^+)^2-(\alpha^+)^2-(\beta^+-\alpha^+)^2
\leq (\beta^+)^2-(\alpha^+)^2.
\end{equation}
Consequently, for any $\alpha,\beta,\gamma\in\bR$ 
$$
2(\beta-\alpha-\gamma)\alpha^+\leq(\beta^+)^2-(\alpha^+)^2-2\gamma\alpha^+. 
$$
Using this with 
$\alpha=u(x)$, $\beta=u(x+c_t(x,\zeta))$ and $\gamma=c_t(x,\zeta)\nabla u(x)$, 
and taking into account that 
$
2\nabla uu^+=\nabla (u^+)^2, 
$
we can easily see that 
$$
2\<\mathcal{I}^{(1)}_tu,u^+ \>=2(\mathcal{I}^{(1)}_tu,u^+ )
\leq  \int_{\bR^d}\mathcal{I}^{(1)}_t(u^+)^2 (x) \ dx. 
$$
Hence \eqref{equ estimate positive part}  
follows from  Lemma \ref{estimate I(u^2)}. 
One can prove \eqref{equ estimate u} 
in the same way, by using the inequality 
$2(\beta-\alpha)\alpha \leq \beta^2- \alpha^2$, 
instead of \eqref{arithmetic inequality}. 
\end{proof}

For $u \in H^1(\bR^d)$ we set 
\begin{align}
\nonumber
&\mu_t(u):= 
 \int_F \int_{\bR^d} [(\lambda_t(x+b_t(\zeta),\zeta)u(x+b_t(\zeta))]^2-[u(x)]^2 \\  \nonumber
   &-2u(x)[\lambda_t(x+b_t(\zeta),\zeta)u(x+b_t(\zeta))-u(x)]dx \pi^{(2)}(d\zeta),
\end{align}
\begin{equation}                              
\rho_t(u):=2\<\mathcal{I}^{(2)}_tu,u \>+ \mu_t(u),
\end{equation}
\begin{align}
\nonumber
&\tilde{\mu}_t(u):= 
 \int_F \int_{\bR^d} [(\lambda_t(x+b_t(\zeta),\zeta)u(x+b_t(\zeta)))^+]^2-[u^+(x)]^2 \\  \nonumber
   &-2u^+(x)[\lambda_t(x+b_t(\zeta),\zeta)u(x+b_t(\zeta))-u(x)]dx \pi^{(2)}(d\zeta),
\end{align}
\begin{equation}                              \label{rho}
\tilde{\rho}_t(u):=2\<\mathcal{I}^{(2)}_tu,u^+ \>+  \tilde{\mu}_t(u).
\end{equation}
Using the simple inequality $|[(x+y)^+]^2-[x^+]^2-2x^+y| \leq   
2|y|^2$, and Assumption \ref{assumptionequation2} one can see that $\tilde{\mu}_t(u)$ is continuous in $u \in H^1(\bR^d)$.   
It can be shown similarly that  $\mu_t(u)$ is continuous in $u\in H^1(\bR^d)$.

\begin{lemma}                            \label{estimate positive part eq 2}
For any $u \in H^1(\bR^d)$, $(\omega,t) \in \Omega \times \bR^+$  and $ \varepsilon >0 $ we have
         \begin{equation}              \label{estimate integrals equation 2 without +}
\rho_t(u)   \leq \varepsilon |u|_{H^1(\bR^d)}^2+N(\varepsilon)|u|^2_{L_2(\bR^d)},
\end{equation}    
 \begin{equation}              \label{estimate integrals equation 2}
\tilde{\rho}_t(u)   \leq \varepsilon |u^+|_{H^1(\bR^d)}^2+N(\varepsilon)|u^+|^2_{L_2(\bR^d)}.
\end{equation}
\end{lemma}
\begin{proof}
Since \eqref{estimate integrals equation 2 without +} 
can be shown in the same way as \eqref{estimate integrals equation 2}, we only prove the latter one. Clearly it suffices to prove it 
for $u\in C^\infty_c(\bR^d)$. A simple calculation shows that 
$$
\tilde{\rho}_t(u)=
\tilde{\varrho}_t(u)+ \int_F\int_{\bR^d} (\lambda_t(x,\zeta)-1)^2[ u^+(x)]^2 dx\pi^{(2)}(d\zeta)
$$
$$
+\int_F\int_{\bR^d}2b_t(\zeta) \cdot  \nabla(u(x)\lambda_t(x,\zeta))  u^+(x) (\lambda_t(x,\zeta)-1) dx\pi^{(2)}(d\zeta)
$$

By Young's inequality, Assumption \ref{assumptionequation2} and \eqref{estimate varrho tilde} we get that
$$
\tilde{\rho}_t(u)   \leq \varepsilon |u^+|_{H^1(\bR^d)}^2+N(\varepsilon)|u^+|^2_{L_2(\bR^d)}.
$$
\end{proof}
\begin{lemma}         \label{f and g}
Let Assumption \ref{monotnicity of f} hold. Then for all $(\omega ,t) \in \Omega \times [0,T]$, $u\in H^1_0(Q)$ and $\varepsilon > 0 $ we have
$$
2(f_t(u,\nabla u)-f_t(v,\nabla v), u-v)+\int_Z|g_t(z,u)-g_t(z,v)|^2_{L_2(Q)} \nu (dz)
$$
\begin{equation}                     \label{monotonicity}
\leq \varepsilon|u-v|^2_{H^1_0(Q)}+N(\varepsilon)|u-v|^2_{L_2(Q)},
\end{equation}
$$
2(f_t(u,\nabla u)-f_t(v,\nabla v), (u-v)^+)+\int_Z|I_{u>v}(g_t(z,u)-g_t(z,v))|^2_{L_2(Q)} \nu(dz)
$$
\begin{equation}                     \label{monotonicity with +}
\leq \varepsilon|(u-v)^+|^2_{H^1_0(Q)}+N(\varepsilon)|(u-v)^+|^2_{L_2(Q)},
\end{equation}
where $N(\varepsilon)$ depends only on $\varepsilon$ and $K$.
\end{lemma}
\begin{proof}
We show \eqref{monotonicity with +}. Using the second part of  Assumption \ref{monotnicity of f} and Young's inequality we have
$$
2(f_t(u,\nabla u)-f_t(v,\nabla v), (u-v)^+) \leq  \frac{K}{\varepsilon}|(u-v)^+|_{L_2(Q)}^2+\varepsilon |\nabla (u-v)^+|_{L_2(Q)}^2
$$ 
$$
+\int_Q  (f_t(x,u, \nabla u)-f_t(x,v, \nabla u))(u(x)-v(x))^+ dx.
$$
This combined with  Assumption \ref{monotnicity of f} gives \eqref{monotonicity with +}. Inequality \eqref{monotonicity} can be shown in the same  way.
\end{proof}

\section{Proof of Theorems \ref{existence equation 1}, \ref{Maintheorem}, \ref{existence equation 2} and \ref{Maintheorem2}}
 We are now ready to proceed with the proofs of the main theorems.
 
 \begin{proof}
 [Proof of Theorems  \ref{existence equation 1} and \ref{existence equation 2}]
 We prove Theorem \ref{existence equation 1}. It suffices to show that conditions I) through IV) from \cite{G3} are satisfied, and then the result follows immediately from Theorems 2.9 and 2.10 of the same article. The growth condition of the operator $L_t+f_t(\cdot)$ can be verified easily.  Notice that for every $\omega, t$ and  $x$, the function $f_t(x,r,r')$ is continuous in $(r,r')$. Using this, ii) from Assumption \ref{Assumptionforspde} and the fact that $L_t$ is a bounded linear operator from $H^1_0(Q)$ into $H^{-1}(Q)$, we see that $L_t+f_t(\cdot)$ is semicontinuous (in the sense of \cite{G3}).  Now, by ii) and iv) from Assumption \ref{Assumptionforspde}, the boundedness of $\phi$ and  \eqref{equ estimate u} we see that for a $\theta >0$ and a constant $C$ we have 
 $$
 2\<L_tu,u\>+2(u,f_t(u,\nabla u))+\sum_k|G^k_t(u)|^2_{L_2(Q)}
 + \int_Z |g_t(u)|^2_{L_2(Q)} \nu(dz)
 $$
 $$
 \leq -\theta |u|^2_{H^1_0(Q)} + C|u|^2_{L_2(Q)}+C|\bar h_t|^2_{L_2(Q)}.
 $$
 for all $t$, $\omega$ and $u \in H^1_0(Q)$. 
This shows that the coercivity condition is satisfied. Using i), iv),  v) from Assumption \ref{Assumptionforspde} and  \eqref{equ estimate u} we see that for all $(t,\omega)$ and $\gamma>0$
 $$
 2\<L_tu-L_tv,u-v\> +\sum_k|G^k(u)-G^k(v)|^2_{L_2(Q)}
 $$
 $$
 \leq (\gamma - \varkappa) |u-v|^2_{H^1_0(Q)} + C(\gamma)|u-v|^2_{L_2(Q)},
 $$ 
 for all $u,v \in H^1_0(Q)$, where $\varkappa$ is the ellipticity constant 
 form part (iv) of Assumption \ref{Assumptionforspde}. 
 Combining this with \eqref{monotonicity} we have that the monotonicity condition 
 is also satisfied. The proof of Theorem \ref{existence equation 2} goes in the same way.  
 We omit the details, we only note that  one  also has to use  
 \eqref{estimate integrals equation 2 without +}.
\end{proof} 

\begin{proof}[Proof of Theorem \ref{Maintheorem}] Without loss of generality we can assume 
that Assumption \ref{monotnicity of f} is satisfied by $f$. For the difference $h=u-v$ we have
$$
h_t=h_0+\int_{(0,t]} L_sh_s+f_s(u_s, \nabla u_s)-F_s(v_s, \nabla v_s) \, ds
$$
$$
+ \int_{(0,t]} \phi^{ki}_s D_ih_s+\sigma^k_s(u_s)-\sigma^k_s(v_s)\,dw^k_s
$$
\[
+\int_{(0,t]} \int_Z g(s,z,u_{s-})-g(s,z,v_{s-}))\tilde{N}(ds,dz).\]
By Theorem \ref{Ito's formula} we have
$$
|h^+_t|^2_{L^2} = \int_{(0,t]} A^{(1)}_s+ A^{(2)}_s+A^{(3)}_s+2\<\mathcal{I}^{(1)}_sh_s,h^+_s \> \ ds+m_t
$$
for a local martingale $m_t$, where
\begin{align}
\nonumber
&A^{(1)}_s=\int_{Q}\Big\{ -2a^{ij}_s(x)D_ih^+_s(x)D_jh^+_s(x) \\  \label{A1}
& +\sum_k \Big| I_{h_s> 0}\sum_i \phi^{ki}_s(x)D_ih_s(x)+I_{h_s> 0}(\sigma^k_s(x,u_s(x))-\sigma^k_s(x,v_s(x)))\Big|^2\Big\} dx \\ \label{A2}
&A^{(2)}_s= 2\int_Q (f_s(x,u_s, \nabla u_s)-F_s(x,v_s, \nabla v_s))h^+_s(x)  dx\\ \nonumber
& A^{(3)}_s=\int_Z \int_Q \{[h_s(x)+g_s(x,z,u_{s-}(x))-g_s(x,z,v_{s-}(x))]^+\}^2 
-|h_s(x)^+|^2\\ \nonumber
&-2 h^+_s(x)[g_s(x,z,u_s(x))-g_s(x,z,v_s(x))] dx \nu(dz).
\end{align}
One can easily see that for every $\varepsilon>0$, there exist $C(\varepsilon)>0$ depending only on $\varepsilon$, $K$ and $d$, such that
$$
A^{(1)}_s \leq (-\varkappa+\varepsilon) |h^+_s|^2_{H^1_0(Q)} +C(\varepsilon) |h^+_s|^2_{L_2(Q)}.
$$
By  Assumption \ref{assumptionforcomparison} 
we obtain 
$$
A^{(3)}_s=  \int_Z \int_Q I_{h_s > 0}|g_s(x,z,u_s)-g_s(x,z,v_s)|^2dx \nu(dz).
$$
Hence, by  \eqref{monotonicity with +} we have
$$
A^{(2)}_s+A^{(3)}_s \leq \varepsilon|h^+_s|^2_{H^1_0(Q)} +C(\varepsilon) |h^+_s|^2_{L_2(Q)}.
$$
 Combining these estimates and using \eqref{equ estimate positive part}
 we have  a constant $C$ such that, almost surely
 $$
|h^+_t|^2_{L^2(Q)} 
\leq C \int_{(0,t]} |h^+_s|^2_{L^2(Q)}\, ds+m_t
\quad\text{ for all $t\in[0,T]$}.
$$
Let $(\tau_n)_{n\in \mathbb{N}}$ be stopping times such that 
$\int_0^{t\wedge \tau_n} |h^+_s|^2_{L^2(Q)}\, ds \leq n$ and almost surely, $\tau_n =T$ for $n$ large enough.
By a standard localization argument and Fatou's lemma we get 
\[
EI_{t \leq \tau_n} |h^+_t|^2_{L^2(Q)} 
\leq C \int_{(0,t]} EI_{s \leq \tau_n}|h^+_s|^2_{L^2(Q)}\, ds< \infty
\quad\text{for all $t\in[0,T]$},
\]
and the result follows by Gronwall's and Fatou's lemmas.
\end{proof}
\begin{proof}[Proof of Theorem \ref{Maintheorem2}]We assume again
that Assumption \ref{monotnicity of f} is satisfied by $f$. For the difference $h=u-v$ we have
$$
h_t=h_0+\int_{(0,t]} \{ \mathcal{L}_sh_s+f_s(u_s, \nabla u_s)-F_s(v_s, \nabla v_s) \} \ ds
$$
$$
+ \int_{(0,t]} \{\phi^{ki}_s D_ih_s+\sigma^k_s(u_s)-\sigma^k_s(v_s)\} \,dw^k_s
 +\int_{(0,t]}  \int_F S_{s,\zeta} h_{s-} \tilde{M}(ds,d\zeta)
$$
By Theorem \ref{Ito's formula} we have
$$
|h^+_t|^2_{L^2(\bR^d)} = \int_{(0,t]} A^{(1)}_s+ A^{(2)}_s+\tilde{\rho}_s(h_s)+\<\mathcal{I}^{(1)}_sh_s,h^+_s \> \ ds+m_t
$$
for a local martingale $m_t$. Here  $A^{(1)}, A^{(2)}$ are as in \eqref{A1}, \eqref{A2} (with the  integration  over $\bR^d$ instead of $Q$), and $\tilde{\rho}$ is defined in \eqref{rho}.
By using the same arguments as in the previous proof, this time also using \eqref{estimate integrals equation 2}, we bring the proof to an end.
\end{proof}




\end{document}